\newtheorem{theorem}{Theorem}
\newtheorem{corollary}[theorem]{Corollary}
\newtheorem{lemma}[theorem]{Lemma}
\begin{document}

\title{Lyapunov-type inequality for a fractional boundary value problem 
with natural conditions\thanks{This is a preprint of a paper whose final 
and definite form is with \emph{SeMA Journal}, ISSN: 2254-3902 (Print) 2281-7875 (Online). 
Submitted 05-March-2017; Article revised 04-Apr-2017; Article accepted for publication 24-Apr-2017.}}

\author{Assia Guezane-Lakoud$^{1}$\\ 
{\small \texttt{a\_guezane@yahoo.fr}}
\and	
Rabah Khaldi$^{1}$\\ 
{\small \texttt{rkhadi@yahoo.fr}}
\and 
Delfim F. M. Torres$^{2}$\\
{\small \texttt{delfim@ua.pt}}}

\date{$^{1}$Laboratory of Advanced Materials,\\
Department of Mathematics,\\
Badji Mokhtar-Annaba University,\\
P.O. Box 12, 23000 Annaba, Algeria\\[0.3cm]
$^2$Center for Research and Development\\
in Mathematics and Applications (CIDMA),\\
Department of Mathematics, University of Aveiro,\\
3810-193 Aveiro, Portugal}

\maketitle


\begin{abstract}
We derive a new Lyapunov type inequality for a boundary value
problem involving both left Riemann--Liouville and right Caputo 
fractional derivatives in presence of natural conditions.
Application to the corresponding eigenvalue problem is also discussed.

\medskip

\noindent \textbf{Keywords:} fractional calculus, 
Lyapunov inequality, eigenvalue problem.

\medskip

\noindent \textbf{MSC 2010:} 26A33, 26D15, 34A08.
\end{abstract}


\section{Introduction}

Lyapunov's inequality is a useful tool in the study of spectral 
properties of ordinary differential equations \cite{MR3443424,14}. 
The classical Lyapunov inequality is given in the following theorem.

\begin{theorem}[See \cite{10,12}] 
If the boundary value problem
\begin{equation}
\label{1.1}
\begin{gathered}
-u^{\prime \prime }\left( t\right) 
=q\left( t\right) u\left( t\right), \quad a<t<b,\\
u\left( a\right) =u\left( b\right) =0
\end{gathered}
\end{equation}
has a nontrivial continuous solution, where 
$q$ is a real and continuous function, then
\begin{equation}
\label{1.2}
\int_{a}^{b}\left\vert q\left( t\right) \right\vert dt
\geq \frac{4}{b-a}. 
\end{equation}
Furthermore, the constant 4 in \eqref{1.2} is sharp. 
\end{theorem}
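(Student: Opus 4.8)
The plan is to argue directly with the point where $|u|$ is largest; essentially the same conclusion can be reached through the Green's function $G(t,s)$ of $-u''$ under Dirichlet conditions, using $0\le G(t,s)\le G\bigl(\tfrac{a+b}{2},\tfrac{a+b}{2}\bigr)=\tfrac{b-a}{4}$, but the elementary route is shorter. Since $q$ is continuous, any solution $u$ of \eqref{1.1} belongs to $C^{2}[a,b]$. As $u$ is nontrivial and $u(a)=u(b)=0$, the continuous function $|u|$ attains its maximum $M:=\max_{t\in[a,b]}|u(t)|>0$ at some interior point $t_{0}\in(a,b)$; replacing $u$ by $-u$ if necessary, we may assume $u(t_{0})=M$, whence $u'(t_{0})=0$.

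First I would bound $u'$ on either side of $t_{0}$. Integrating $u''=-qu$ from $t$ to $t_{0}$ and using $u'(t_{0})=0$, for $t\in[a,t_{0}]$ one gets $u'(t)=\int_{t}^{t_{0}}q(s)u(s)\,ds$, hence $|u'(t)|\le M\int_{a}^{t_{0}}|q(s)|\,ds$. Then
\begin{equation*}
M=u(t_{0})-u(a)=\int_{a}^{t_{0}}u'(t)\,dt\le (t_{0}-a)\,M\int_{a}^{t_{0}}|q(s)|\,ds,
\end{equation*}
so $\int_{a}^{t_{0}}|q(s)|\,ds\ge (t_{0}-a)^{-1}$. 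The symmetric estimate on $[t_{0},b]$, starting from $M=u(t_{0})-u(b)=-\int_{t_{0}}^{b}u'(t)\,dt$, gives $\int_{t_{0}}^{b}|q(s)|\,ds\ge (b-t_{0})^{-1}$. Adding the two estimates and applying $\frac1x+\frac1y\ge\frac{4}{x+y}$ with $x=t_{0}-a$, $y=b-t_{0}$ (so $x+y=b-a$) yields
\begin{equation*}
\int_{a}^{b}|q(t)|\,dt\ \ge\ \frac{1}{t_{0}-a}+\frac{1}{b-t_{0}}\ \ge\ \frac{4}{b-a},
\end{equation*}
which is \eqref{1.2}.

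It remains to show that the constant $4$ is best possible, and this is the only step I expect to require real care. The chain of inequalities above is an equality precisely when $t_{0}=\frac{a+b}{2}$ and $|q|$ concentrates at that midpoint; the limiting profile is the tent function $u_{0}$, affine on $[a,\frac{a+b}{2}]$ and on $[\frac{a+b}{2},b]$ with $u_{0}(a)=u_{0}(b)=0$ and $u_{0}(\frac{a+b}{2})=1$, for which $\int_{a}^{b}(u_{0}')^{2}=\frac{4}{b-a}$. I would therefore fix nonnegative continuous weights $\rho_{n}$, supported in shrinking neighbourhoods of $\frac{a+b}{2}$, with $\int_{a}^{b}\rho_{n}=1$, and let $\lambda_{n}>0$ be the first eigenvalue of the weighted Dirichlet problem $-u''=\lambda\rho_{n}u$, $u(a)=u(b)=0$; its eigenfunction is a genuine nontrivial solution of \eqref{1.1} with $q=\lambda_{n}\rho_{n}$, so the part already proved gives $\lambda_{n}=\int_{a}^{b}\lambda_{n}\rho_{n}(t)\,dt\ge\frac{4}{b-a}$. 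On the other hand, testing the Rayleigh quotient $\lambda_{n}=\min_{v\ne 0}\bigl(\int_{a}^{b}(v')^{2}\big/\int_{a}^{b}\rho_{n}v^{2}\bigr)$ with $v=u_{0}$ and letting $n\to\infty$, so that $\int_{a}^{b}\rho_{n}u_{0}^{2}\to u_{0}(\tfrac{a+b}{2})^{2}=1$, gives $\limsup_{n}\lambda_{n}\le\frac{4}{b-a}$. Hence $\lambda_{n}\to\frac{4}{b-a}$, and no constant larger than $4$ can hold in \eqref{1.2}. The main obstacle is exactly this sharpness argument — justifying the convergence $\int\rho_{n}u_{0}^{2}\to1$ and the behaviour of the first eigenvalue of the degenerating weighted problem (equivalently, mollifying the tent profile to produce admissible continuous potentials $q_{n}$ with $\int_{a}^{b}|q_{n}|\to\frac{4}{b-a}$) — while the inequality itself is elementary.
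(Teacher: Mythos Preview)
The paper does not actually prove this theorem: it is quoted as the classical Lyapunov inequality, with references \cite{10,12}, purely to motivate the fractional generalization developed afterward. There is therefore no proof in the paper to compare your argument against.

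That said, your proof of the inequality itself is correct and is one of the standard elementary routes (the alternative you mention, via the Green function bound $G(t,s)\le (b-a)/4$, is the one closer in spirit to what the paper does for its own Theorem~\ref{thm:mr}). Your sharpness sketch is also sound: the convergence $\int_a^b \rho_n u_0^2 \to u_0\bigl(\tfrac{a+b}{2}\bigr)^2 = 1$ follows at once from the continuity of $u_0$ and the concentration of $\rho_n$ at the midpoint, after which the Rayleigh quotient with test function $u_0$ gives $\limsup_n \lambda_n \le 4/(b-a)$, matching the lower bound already established and showing that $4$ cannot be improved.
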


Many authors have extended the Lyapunov inequality 
\eqref{1.2} \cite{MR3443424,11,14,15}. Here we are interested
in generalizations of \eqref{1.2} that are associated to
a fractional differential equation, where the second order derivative 
in \eqref{1.1} is substituted by some fractional operator \cite{1,4,7,8,10,9}.

Recently, in 2016, Ferreira obtained Lyapunov type inequalities 
for Caputo or Riemann--Liouville sequential fractional differential 
equations with Dirichlet boundary conditions \cite{8}.
In 2017, Agarwal and \"Ozbekler obtained Lyapunov type inequalities 
for mixed nonlinear Riemann--Liouville fractional differential equations 
with a forcing term and Dirichlet boundary conditions \cite{1}. 
Here, we prove a new Lyapunov type inequality for a sequential
fractional boundary value problem involving both Riemann--Liouville 
and Caputo fractional derivatives:
\begin{equation}
\label{1.3}
-^{C}D_{b^{-}}^{\alpha }D_{a^{+}}^{\beta }u\left( t\right) 
+q(t)u\left(t\right) =0, \quad a<t<b,  
\end{equation}
where $0<\alpha ,\beta \leq 1$, $1<\alpha +\beta \leq 2$, 
$^{C}D_{b^{-}}^{\alpha }$ denotes the right Caputo derivative, 
$D_{a^{+}}^{\beta}$ denotes the left Riemann--Liouville derivative, 
$u$ is the unknown function, and $q$ is continuous on $\left[ a,b\right]$. 
Such problems, with both left and right fractional derivatives, arise 
in the study of Euler--Lagrange equations for fractional problems 
of the calculus of variations \cite{MR3443073,MR3331286,MR2610543}. 
We consider the fractional differential equation \eqref{1.3} 
with natural boundary conditions \cite{2,3}: 
\begin{equation}
\label{1.4}
u\left( a\right) =D_{a^{+}}^{\beta }u\left( b\right) =0.
\end{equation}
To the best of our knowledge, this is the first work 
to give a Lyapunov type inequality (see Theorem~\ref{thm:mr}) 
for mixed right Caputo and left Riemann--Liouville fractional 
differential equations. The result is important because, in many applications, 
the natural boundary conditions have a physical interpretation. For instance, 
fractional variational problems require imposition of natural boundary 
conditions that the optimum solution must satisfy \cite{MR3443073}. 
Moreover, for boundary value problems, when sufficient kinematic conditions 
are not specified, the natural boundary conditions are necessary to solve 
the problem analytically \cite{MR2610543}. Hence, natural boundary conditions 
are necessary to solve a fractional boundary value problem, and the 
fractional problem with natural boundary conditions is not obvious 
neither trivial. This is the case of the problem studied in our paper, where
condition \eqref{1.4} is imposed naturally, because we have both a right Caputo 
derivative and a left Riemann--Liouville derivative in equation \eqref{1.3}. 

The paper is organized as follows.
In Section~\ref{sec2}, we briefly recall
the necessary concepts and results from fractional calculus.
Our results are then formulated and proved in Section~\ref{sec3}.
We end with Section~\ref{sec4}, where an example of application 
to a fractional eigenvalue problem is given, and 
Section~\ref{sec5} of conclusion.


\section{Preliminaries}
\label{sec2}

We recall here the essential definitions on fractional calculus. 
For details on the subject we refer the reader to \cite{11,15}.
Let $p>0$. Then the left and right Riemann--Liouville fractional 
integral of a function $g$ are defined respectively by
\begin{eqnarray*}
I_{a^{+}}^{p}g(t) 
&=&\frac{1}{\Gamma \left( p\right) }\int_{a}^{t}
\frac{g(s)}{(t-s)^{1-p}}ds,\\
I_{b^{-}}^{p}g(t) &=&\frac{1}{\Gamma\left( p\right) }
\int_{t}^{b}\frac{g(s)}{(s-t)^{1-p}}ds.
\end{eqnarray*}
The left Riemann--Liouville fractional derivative and the right Caputo
fractional derivative of order\ $p>0$ of a function $g$ are
\begin{eqnarray*}
D_{a^{+}}^{p}g(t) 
&=&\frac{d^{n}}{dt^{n}}\left( I_{a^{+}}^{n-p}g\right) (t),\\
^{C}D_{b^{-}}^{p}g(t) 
&=&\left( -1\right) ^{n}I_{b^{-}}^{n-p}g^{\left(n\right) }(t),
\end{eqnarray*}
respectively, where $p \in (n-1, n)$. With respect to the properties 
of Riemann--Liouville and Caputo fractional operators, 
we mention the following. Let $p \in (n-1, n)$ 
and $f\in L_{1}\left[ a,b\right]$. Then,
\begin{enumerate}
\item $I_{a^{+}}^{p}D_{a^{+}}^{p}f\left( t\right) 
=f\left( t\right)-\sum_{i=1}^{n}c_{i}\left( t-a\right) ^{p-i}$;

\item $I_{b^{-}}^{p}$$^{C}D_{b^{-}}^{p}f\left( t\right) 
=f\left( t\right)-\sum_{k=0}^{n-1}
\frac{\left( -1\right)^{k}f^{\left( k\right)}\left(b\right)}{k!}\left( b-t\right)^{k}$.
\end{enumerate}


\section{Lyapunov-type inequality}
\label{sec3}

We begin by transforming problem \eqref{1.3}--\eqref{1.4} 
into an equivalent integral equation.

\begin{lemma}
Assume that $0<\alpha ,\beta \leq 1$ and $1<\alpha +\beta \leq 2$.
Function $u$ is a solution to the boundary value problem 
\eqref{1.3}--\eqref{1.4} if and only if $u$ satisfies 
the integral equation 
\begin{equation*}
u\left( t\right) =\int_{a}^{b}G(t,r)q(r)u(r)dr,  
\end{equation*}
where
\begin{equation}
\label{2.2}
G(t,r)=\left\{ 
\begin{array}{c}
\frac{1}{\Gamma \left( \alpha \right) 
\Gamma \left( \beta \right) } \displaystyle 
\int_{a}^{r}\left( t-s\right) ^{\beta -1}\left( r-s\right) ^{\alpha-1}ds, 
\quad a\leq r\leq t\leq b, \\[0.3cm] 
\frac{1}{\Gamma \left( \alpha \right) \Gamma \left( \beta \right) }
\displaystyle \int_{a}^{t}\left( t-s\right)^{\beta -1}\left(r
-s\right)^{\alpha-1}ds, \quad a\leq t\leq r\leq b.
\end{array}
\right.   
\end{equation}
\end{lemma}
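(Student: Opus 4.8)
The plan is to reduce the fractional differential equation to an integral equation by applying the two fractional integration-by-parts-type identities recorded in Section~\ref{sec2}, and then to identify the resulting kernel with the function $G(t,r)$ in \eqref{2.2}. First I would set $v(t) = D_{a^{+}}^{\beta}u(t)$, so that equation \eqref{1.3} reads $-{}^{C}D_{b^{-}}^{\alpha}v(t) + q(t)u(t) = 0$, i.e. ${}^{C}D_{b^{-}}^{\alpha}v(t) = q(t)u(t)$. Applying the right Riemann--Liouville integral $I_{b^{-}}^{\alpha}$ to both sides and invoking property~2 (with $n=1$, since $0<\alpha\le 1$), I get $v(t) - v(b) = I_{b^{-}}^{\alpha}\bigl(q(\cdot)u(\cdot)\bigr)(t)$. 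The natural boundary condition $D_{a^{+}}^{\beta}u(b) = v(b) = 0$ from \eqref{1.4} kills the boundary term, leaving $D_{a^{+}}^{\beta}u(t) = I_{b^{-}}^{\alpha}\bigl(q(\cdot)u(\cdot)\bigr)(t)$.

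Next I would apply the left Riemann--Liouville integral $I_{a^{+}}^{\beta}$ to this identity and use property~1 (again with $n=1$): $I_{a^{+}}^{\beta}D_{a^{+}}^{\beta}u(t) = u(t) - c_1 (t-a)^{\beta-1}$. Since $0<\beta\le 1$ forces $\beta - 1 \le 0$, and we need $u$ to be continuous (indeed $u(a)=0$ must make sense), the coefficient $c_1$ must vanish — here I would be careful to justify that the other condition in \eqref{1.4}, namely $u(a)=0$, is exactly what forces $c_1=0$ (when $\beta<1$ the term $(t-a)^{\beta-1}$ blows up at $t=a$, so $c_1=0$ is forced by continuity; when $\beta=1$ the term is the constant $c_1$ and $u(a)=0$ gives $c_1=0$ directly). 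This yields $u(t) = I_{a^{+}}^{\beta}I_{b^{-}}^{\alpha}\bigl(q(\cdot)u(\cdot)\bigr)(t)$. Writing out the two iterated integrals explicitly,
\begin{equation*}
u(t) = \frac{1}{\Gamma(\beta)\Gamma(\alpha)}\int_{a}^{t}(t-s)^{\beta-1}
\left(\int_{s}^{b}(r-s)^{\alpha-1}q(r)u(r)\,dr\right)ds,
\end{equation*}
and then switching the order of integration (Fubini, legitimate since $q$ and $u$ are continuous on the compact interval) converts this to $\int_{a}^{b}G(t,r)q(r)u(r)\,dr$, where the inner $s$-integration runs over $a\le s\le \min(t,r)$ — which is precisely the case split in \eqref{2.2}: for $r\le t$ the upper limit of the $s$-integral is $r$, and for $t\le r$ it is $t$.

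For the converse, I would start from the integral equation and apply $D_{a^{+}}^{\beta}$ and then $-{}^{C}D_{b^{-}}^{\alpha}$, checking that the kernel satisfies $D_{a^{+}}^{\beta}G(t,r) = I_{b^{-}}^{\alpha}$-type behaviour in $r$ and that ${}^{C}D_{b^{-}}^{\alpha}D_{a^{+}}^{\beta}G(t,r) = \delta$-like, so that \eqref{1.3} is recovered; the boundary conditions \eqref{1.4} follow by direct inspection of $G$, since $G(a,r)=0$ (as $(a-s)^{\beta-1}$ integrated from $a$ to $a$ vanishes, or more carefully the whole expression vanishes at $t=a$) and $D_{a^{+}}^{\beta}u(b)$ picks up the boundary term that was set to zero. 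The main obstacle I anticipate is the careful bookkeeping in the converse direction — verifying that differentiating the explicit kernel $G$ in \eqref{2.2} reproduces exactly the fractional operators, keeping track of which variable is integrated and making sure the non-integer-power singularities are handled correctly (in particular that the Fubini swap is reversible and that applying $D_{a^{+}}^{\beta}$ to $I_{a^{+}}^{\beta}$ of a continuous function is the identity). The forward direction is essentially routine given properties 1 and 2; the delicate point there is only the justification that $c_1=0$ from the condition $u(a)=0$.
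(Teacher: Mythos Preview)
Your argument is correct and follows exactly the paper's route: apply $I_{b^{-}}^{\alpha}$ and then $I_{a^{+}}^{\beta}$, use the two properties from Section~\ref{sec2} together with the boundary conditions \eqref{1.4} to eliminate the constants, and then swap the order of integration via Fubini to identify $G(t,r)$. You are in fact more careful than the paper, which compresses all of this into a single line and does not address the converse direction at all.
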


\begin{proof}
Applying the properties of Caputo and Riemann--Liouville fractional
derivatives and the boundary conditions \eqref{1.4}, then using the Fubini
theorem, we obtain 
\begin{eqnarray*}
u\left( t\right)  
&=&I_{a^{+}}^{\beta }I_{b^{-}}^{\alpha }q(t)u(t)\\
&=&\frac{1}{\Gamma \left( \alpha \right) \Gamma \left( \beta \right)}
\int_{a}^{t}\left( t-s\right) ^{\beta -1}\left( \int_{s}^{b}\left(
r-s\right) ^{\alpha -1}q(r)u(r)dr\right) ds \\
&=&\frac{1}{\Gamma \left( \alpha \right) \Gamma \left( \beta \right)}
\int_{a}^{t}\left( \int_{a}^{r}\left( t-s\right) ^{\beta -1}\left(
r-s\right) ^{\alpha -1}ds\right) q(r)u(r)dr \\
&&\quad +\frac{1}{\Gamma \left( \alpha \right) \Gamma \left( \beta \right)}
\int_{t}^{b}\left( \int_{a}^{t}\left( t-s\right)^{\beta -1}\left(
r-s\right) ^{\alpha -1}ds\right) q(r)u(r)dr,
\end{eqnarray*}
from which the intended result follows.
\end{proof}

We now prove some properties of the Green function \eqref{2.2}.

\begin{lemma}
\label{lemma3}
Assume that $0<\alpha, \beta \leq 1$ and $1<\alpha +\beta \leq 2$. Then 
the Green function $G$ defined by \eqref{2.2} satisfies the following properties:
\begin{enumerate}
\item $G(t,r)\geq 0$ for all $a\leq r\leq t\leq b$;

\item $\underset{t\in \lbrack a,b]}{\max}G(t,r)=G(r,r)$ 
for all $r\in \lbrack a,b]$;

\item $\underset{r\in \lbrack a,b]}{\max }G(r,r)
=\frac{(b-a)^{\alpha +\beta -1}}{\left( \alpha +\beta -1\right) 
\Gamma \left( \alpha \right) \Gamma \left(\beta \right) }$.
\end{enumerate}
\end{lemma}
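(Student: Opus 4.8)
The plan is to analyze the single integral $H(t,r) := \int_a^{\min(t,r)} (t-s)^{\beta-1}(r-s)^{\alpha-1}\,ds$ that appears (up to the positive constant $1/(\Gamma(\alpha)\Gamma(\beta))$) in both branches of $G$. Property~1 is immediate: for $a\le r\le t\le b$ and $a\le s\le r$ we have $t-s>0$ and $r-s>0$, so the integrand is nonnegative (using $\alpha,\beta>0$), hence $G(t,r)\ge 0$. The work is in properties~2 and~3.

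For property~2, fix $r\in[a,b]$ and study $t\mapsto G(t,r)$ on $[a,b]$. I would split into the two regimes. On $t\ge r$ we have $G(t,r)=\frac{1}{\Gamma(\alpha)\Gamma(\beta)}\int_a^r (t-s)^{\beta-1}(r-s)^{\alpha-1}\,ds$; differentiating in $t$ brings down a factor $(\beta-1)(t-s)^{\beta-2}$, which is $\le 0$ since $\beta\le 1$, so $G(\cdot,r)$ is nonincreasing on $[r,b]$ and its maximum there is at $t=r$. On $a\le t\le r$ we have $G(t,r)=\frac{1}{\Gamma(\alpha)\Gamma(\beta)}\int_a^t (t-s)^{\beta-1}(r-s)^{\alpha-1}\,ds$; here differentiating in $t$ has two contributions — the boundary term from the upper limit $t$ and the term from differentiating the integrand. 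The boundary term is delicate because $(t-s)^{\beta-1}$ blows up as $s\to t$ when $\beta<1$; I would handle this by the substitution $s=a+(t-a)\sigma$ (or $r-s=(r-t)+(t-s)$ and a scaling argument) to show $G(t,r)$ is nondecreasing in $t$ on $[a,r]$, so again the maximum is at $t=r$. Combining the two regimes gives $\max_{t\in[a,b]} G(t,r)=G(r,r)$.

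For property~3, evaluate $G(r,r)=\frac{1}{\Gamma(\alpha)\Gamma(\beta)}\int_a^r (r-s)^{\alpha+\beta-2}\,ds = \frac{(r-a)^{\alpha+\beta-1}}{(\alpha+\beta-1)\Gamma(\alpha)\Gamma(\beta)}$, using $1<\alpha+\beta\le 2$ so the exponent $\alpha+\beta-2\in(-1,0]$ makes the integral convergent and equal to $\frac{(r-a)^{\alpha+\beta-1}}{\alpha+\beta-1}$. This is increasing in $r$, so its maximum over $[a,b]$ is attained at $r=b$, yielding $\frac{(b-a)^{\alpha+\beta-1}}{(\alpha+\beta-1)\Gamma(\alpha)\Gamma(\beta)}$ as claimed.

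The main obstacle is the monotonicity claim on $a\le t\le r$ in property~2: naive differentiation under the integral sign is not licit because the integrand is singular at the moving endpoint $s=t$ when $\beta<1$. I expect to resolve this by rescaling the integration variable to a fixed interval before differentiating (writing $G(t,r)=\frac{(t-a)^{\beta}}{\Gamma(\alpha)\Gamma(\beta)}\int_0^1 (1-\sigma)^{\beta-1}\big(r-a-(t-a)\sigma\big)^{\alpha-1}\,d\sigma$ and checking the sign of the $t$-derivative of this manifestly differentiable expression), which trades the endpoint singularity for an integrable factor $(1-\sigma)^{\beta-1}$ and makes the monotonicity transparent.
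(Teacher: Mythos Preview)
Your proposal is correct, and for properties~1 and~3 it matches the paper's proof essentially verbatim. The difference is in property~2, where you argue via monotonicity in $t$ (differentiating, and rescaling to tame the endpoint singularity), whereas the paper bypasses differentiation entirely by using pointwise inequalities on the integrand. Concretely: for $r\le t$ the paper uses $(t-s)^{\beta-1}\le (r-s)^{\beta-1}$ (valid for $s\le r\le t$ since $\beta\le 1$) to get $g_1(t,r)\le G(r,r)$ directly; and for $t\le r$ it uses $(r-s)^{\alpha-1}\le (t-s)^{\alpha-1}$ (valid for $s\le t\le r$ since $\alpha\le 1$) to obtain
\[
g_2(t,r)\le \frac{1}{\Gamma(\alpha)\Gamma(\beta)}\int_a^t (t-s)^{\alpha+\beta-2}\,ds
=\frac{(t-a)^{\alpha+\beta-1}}{(\alpha+\beta-1)\Gamma(\alpha)\Gamma(\beta)}
\le G(r,r).
\]
So the ``main obstacle'' you identify --- the singular boundary term when differentiating in $t$ on $[a,r]$ --- simply does not arise in the paper's argument. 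Your substitution trick is a perfectly valid workaround and yields the slightly stronger statement that $t\mapsto G(t,r)$ is monotone on each half-interval, but it is more machinery than needed: the two one-line inequalities above already establish $G(t,r)\le G(r,r)$, which is all the lemma claims.
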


\begin{proof}
Obviously, $G(t,r)\geq 0$ for $t,r\in (a,b)$. Set 
\begin{eqnarray*}
g_{1}(t,r) 
&=&\frac{1}{\Gamma \left( \alpha \right) \Gamma \left( \beta\right)}
\int_{a}^{r}\left( t-s\right) ^{\beta -1}\left( r-s\right) ^{\alpha-1}ds,
\quad a\leq r\leq t\leq b,\\
g_{2}(t,r) 
&=&\frac{1}{\Gamma \left( \alpha \right) \Gamma \left( \beta\right)}\left( 
\int_{a}^{t}\left( t-s\right) ^{\beta -1}\left( r-s\right)^{\alpha -1}ds\right),
\quad a\leq t\leq r\leq b.
\end{eqnarray*}
For $r\leq t$, we have
\begin{equation}
\label{2.3}
g_{1}(t,r)\leq \frac{1}{\Gamma \left( \alpha \right) 
\Gamma \left(\beta\right) }\int_{a}^{r}\left( 
r-s\right)^{\beta -1}\left( r-s\right)^{\alpha-1}ds=G(r,r).  
\end{equation}
Similarly, if $t\leq r$, then 
\begin{equation}
\label{2.4}
\begin{split}
g_{2}(t,r)&\leq \frac{1}{\Gamma \left( \alpha \right) \Gamma \left( \beta
\right) }\int_{a}^{t}\left( t-s\right) ^{\beta -1}\left( t-s\right)^{\alpha-1}ds\\ 
&=\frac{(t-a)^{\alpha +\beta -1}}{\left( \alpha +\beta -1\right) \Gamma
\left( \alpha \right) \Gamma \left( \beta \right) }\\
&\leq \frac{(r-a)^{\alpha+\beta -1}}{\left( \alpha 
+\beta -1\right) \Gamma \left( \alpha \right)
\Gamma \left( \beta \right) }\\
&=G(r,r).
\end{split}
\end{equation}
Thus, from \eqref{2.3} and \eqref{2.4}, we get $\underset{t\in \lbrack a,b]}{\max}
G(t,r)=G(r,r)$ for all $r\in \lbrack a,b]$. Since $G(r,r)$ is increasing,
we obtain that
\begin{equation*}
\underset{r\in \lbrack a,b]}{\max }G(r,r)
=\frac{(b-a)^{\alpha +\beta -1}}{\left( \alpha +\beta -1\right) 
\Gamma \left( \alpha \right) \Gamma \left(\beta \right) }.
\end{equation*}
The proof is complete.
\end{proof}

Now we are ready to give the Lyapunov type inequality for problem
\eqref{1.3}--\eqref{1.4}.

\begin{theorem}
\label{thm:mr}
Assume that $0<\alpha ,\beta \leq 1$ and $1<\alpha +\beta \leq 2$. 
If the fractional boundary value problem \eqref{1.3}--\eqref{1.4} 
has a nontrivial continuous solution, then
\begin{equation}
\label{2.5}
\int_{a}^{b}\left\vert q\left( r\right) \right\vert dr\geq \frac{\left(
\alpha +\beta -1\right) \Gamma \left( \alpha \right) \Gamma \left( \beta
\right) }{(b-a)^{\alpha +\beta -1}}.  
\end{equation}
Furthermore, the inequality \eqref{2.5} is sharp. 
\end{theorem}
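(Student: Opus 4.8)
The plan is to follow the standard Lyapunov-inequality recipe: convert the problem to a fixed-point / integral identity, take absolute values, and bound the integral kernel uniformly. By the first lemma, any nontrivial continuous solution $u$ of \eqref{1.3}--\eqref{1.4} satisfies $u(t)=\int_a^b G(t,r)q(r)u(r)\,dr$ with $G$ as in \eqref{2.2}. First I would work in the Banach space $C[a,b]$ with the sup-norm $\|u\|=\max_{t\in[a,b]}|u(t)|$; since $u$ is nontrivial and continuous, $\|u\|>0$.

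Next I would estimate: for every $t\in[a,b]$,
\begin{equation*}
|u(t)|\le \int_a^b |G(t,r)|\,|q(r)|\,|u(r)|\,dr
\le \|u\|\int_a^b G(t,r)\,|q(r)|\,dr,
\end{equation*}
using that $G\ge 0$ (Lemma~\ref{lemma3}, item 1, together with the analogous nonnegativity on the region $t\le r$, which is immediate from the integral formula). Then, invoking items 2 and 3 of Lemma~\ref{lemma3}, I would replace $G(t,r)$ by its global maximum: $G(t,r)\le G(r,r)\le \max_{r\in[a,b]}G(r,r)=\dfrac{(b-a)^{\alpha+\beta-1}}{(\alpha+\beta-1)\Gamma(\alpha)\Gamma(\beta)}$. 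This yields
\begin{equation*}
|u(t)|\le \|u\|\cdot\frac{(b-a)^{\alpha+\beta-1}}{(\alpha+\beta-1)\Gamma(\alpha)\Gamma(\beta)}\int_a^b |q(r)|\,dr
\end{equation*}
for all $t$; taking the maximum over $t$ and dividing by $\|u\|>0$ gives exactly \eqref{2.5} after rearranging.

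For sharpness, I would argue that the constant cannot be improved: one should exhibit (or point to a limiting family of) parameter choices and functions $q$ for which equality is approached. The natural route is to let $q$ concentrate near the point $r=b$ where $G(r,r)$ attains its maximum — e.g. take $q$ to be (an approximation to) a Dirac mass at $b$, so that the single inequality $G(t,r)\le G(b,b)$ becomes tight — and check that the corresponding solution makes both sides of \eqref{2.5} equal in the limit; alternatively one recovers the classical sharp constant $4/(b-a)$ by specializing $\alpha=\beta=1$, which already shows the bound is best possible in that regime.

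The routine part is the chain of inequalities; the only genuine obstacle is the sharpness claim, since "sharp" here should mean the constant is not replaceable by a larger one, and making that rigorous requires either an explicit extremal (or near-extremal) example or a careful limiting argument. I expect the cleanest justification is the $\alpha=\beta=1$ specialization combined with a concentration argument for $q$, and I would make sure the nonnegativity of $G$ on the full square $[a,b]^2$ (not just $r\le t$) is recorded, since it is used in passing from $|G|$ to $G$ in the estimate above.
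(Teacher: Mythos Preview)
Your derivation of the inequality \eqref{2.5} is essentially identical to the paper's: both use the integral representation from the first lemma, bound $|u(t)|$ by $\|u\|\int_a^b G(t,r)|q(r)|\,dr$, invoke Lemma~\ref{lemma3} to replace $G(t,r)$ by its global maximum $(b-a)^{\alpha+\beta-1}/[(\alpha+\beta-1)\Gamma(\alpha)\Gamma(\beta)]$, take the sup over $t$, and cancel $\|u\|>0$. Your extra care about $G\ge 0$ on the full square is appropriate and matches what the paper uses implicitly.

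Regarding sharpness, note two things. First, the paper's own proof does not actually justify the sharpness claim; it stops after establishing \eqref{2.5}. Second, your proposed justification via $\alpha=\beta=1$ is incorrect as stated: specializing \eqref{2.5} to $\alpha=\beta=1$ gives $\int_a^b|q|\,dr\ge 1/(b-a)$ (this is the paper's Corollary, for the boundary conditions $u(a)=0$, $u'(b)=0$), \emph{not} the classical $4/(b-a)$, which pertains to the Dirichlet problem $u(a)=u(b)=0$. So appealing to the classical sharp constant does not validate sharpness here. Your concentration idea (letting $q$ approximate a Dirac mass at $r=b$, where $G(r,r)$ is maximal) is the right heuristic, but it requires care since the mass sits at the boundary point and one must actually produce a nontrivial solution (or limiting sequence of solutions) of \eqref{1.3}--\eqref{1.4}; this is not addressed in the paper either.
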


\begin{proof}
From Lemma~\ref{lemma3}, we have
\begin{eqnarray*}
\left\vert u\left( t\right) \right\vert &\leq &\int_{a}^{b}G(t,r)\left\vert
q\left( r\right) \right\vert \left\vert u\left( r\right) \right\vert dr \\
&\leq &\int_{a}^{b}G(r,r)\left\vert q\left( r\right) \right\vert \left\vert
u\left( r\right) \right\vert dr \\
&\leq &\frac{(b-a)^{\alpha +\beta -1}\left\Vert u\right\Vert }{\left( \alpha
+\beta -1\right) \Gamma \left( \alpha \right) \Gamma \left( \beta \right) }
\int_{a}^{b}\left\vert q\left( r\right) \right\vert dr,
\end{eqnarray*}
where $\left\vert \left\vert u\right\vert \right\vert =\underset{t\in \left[
a,b\right] }{\max }\left\vert u\left( t\right) \right\vert .$ Consequently,
\begin{equation*}
\left\Vert u\right\Vert \leq \frac{(b-a)^{\alpha +\beta -1}\left\Vert
u\right\Vert }{\left( \alpha +\beta -1\right) \Gamma \left( \alpha \right)
\Gamma \left( \beta \right) }\int_{a}^{b}\left\vert q\left( r\right)
\right\vert dr.
\end{equation*}
Thus, inequality \eqref{2.5} follows.
\end{proof}

Next we give a Lyapunov type inequality in the case $\alpha =\beta =1$.

\begin{corollary}
If the boundary value problem 
\begin{gather*}
u^{\prime \prime }\left( t\right) +q\left( t\right) u\left( t\right) =0\\
u\left( a\right) =0=u^{\prime }\left( b\right)
\end{gather*}
has a nontrivial continuous solution, then the Lyapunov inequality 
\begin{equation*}
\int_{a}^{b}\left\vert q\left( r\right) \right\vert dr\geq \frac{1}{(b-a)}
\end{equation*}
holds.
\end{corollary}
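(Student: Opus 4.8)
The plan is to obtain the corollary as the special case $\alpha=\beta=1$ of Theorem~\ref{thm:mr}, so the main task is to check that, under this substitution, the boundary value problem \eqref{1.3}--\eqref{1.4} reduces exactly to the stated classical problem, and that the right-hand side of \eqref{2.5} collapses to $1/(b-a)$. First I would observe that when $p=1$ we have $n=1$, so $D_{a^{+}}^{1}g(t)=\frac{d}{dt}\bigl(I_{a^{+}}^{0}g\bigr)(t)=g'(t)$ and $^{C}D_{b^{-}}^{1}g(t)=-I_{b^{-}}^{0}g'(t)=-g'(t)$. Hence $-{}^{C}D_{b^{-}}^{1}D_{a^{+}}^{1}u(t)=-\bigl(-(u')'\bigr)(t)=u''(t)$, so equation \eqref{1.3} becomes $u''(t)+q(t)u(t)=0$ on $a<t<b$, which is the first line of the corollary's problem. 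Likewise the natural boundary condition \eqref{1.4}, namely $u(a)=D_{a^{+}}^{\beta}u(b)=0$, becomes $u(a)=u'(b)=0$, matching the second line.

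Next I would evaluate the constant. Setting $\alpha=\beta=1$ gives $\alpha+\beta-1=1$, $\Gamma(\alpha)=\Gamma(1)=1$, $\Gamma(\beta)=\Gamma(1)=1$, and $(b-a)^{\alpha+\beta-1}=(b-a)^{1}=b-a$; therefore
\begin{equation*}
\frac{\left(\alpha+\beta-1\right)\Gamma\left(\alpha\right)\Gamma\left(\beta\right)}{(b-a)^{\alpha+\beta-1}}=\frac{1}{b-a}.
\end{equation*}
Since the pair $(\alpha,\beta)=(1,1)$ satisfies the hypotheses $0<\alpha,\beta\leq 1$ and $1<\alpha+\beta\leq 2$ of Theorem~\ref{thm:mr} (indeed $\alpha+\beta=2$ is the boundary case, which is admitted), any nontrivial continuous solution of the corollary's problem is a nontrivial continuous solution of \eqref{1.3}--\eqref{1.4} with these parameters, so \eqref{2.5} applies verbatim and yields $\int_{a}^{b}\left\vert q(r)\right\vert dr\geq 1/(b-a)$, as claimed.

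I do not anticipate a genuine obstacle here, since the statement is a corollary and the argument is essentially a substitution; the only point demanding a modicum of care is the verification that the fractional operators of order one genuinely coincide with the ordinary first derivative (up to the expected sign in the right Caputo case), which I would spell out exactly as above from the definitions in Section~\ref{sec2}. One could alternatively re-derive the bound directly by rerunning the proof of Theorem~\ref{thm:mr} with the explicit Green function $G(t,r)=\min\{t,r\}-a$ obtained from \eqref{2.2} at $\alpha=\beta=1$, but invoking the theorem is cleaner and is clearly the intended route.
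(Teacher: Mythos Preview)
Your proposal is correct and follows exactly the route implicit in the paper: the corollary is stated immediately after Theorem~\ref{thm:mr} without a separate proof, so it is intended as the direct specialization $\alpha=\beta=1$, which is precisely what you carry out. The only embellishment you add is the explicit verification that the order-one fractional operators reduce to the ordinary derivatives (with the expected sign for the right Caputo derivative), which is a reasonable point to spell out and does not deviate from the paper's approach.
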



\section{Application to a fractional eigenvalue problem}
\label{sec4}

We end with an application of the Lyapunov-type inequality \eqref{2.5} 
to a fractional eigenvalue problem generated 
by the fractional differential equation
\begin{equation}
\label{2.7}
^{C}D_{b^{-}}^{\alpha }D_{a^{+}}^{\beta }u\left( t\right) 
=\lambda u\left(t\right),
\quad a<t<b, \quad \lambda \in \mathbb{R},  
\end{equation}
subject to the boundary conditions \eqref{1.4}.

\begin{corollary}
Assume that $0<\alpha, \beta \leq 1$ and $1<\alpha +\beta \leq 2$. 
If $\lambda $ is an eigenvalue to the fractional boundary value problem
defined by \eqref{2.7} and \eqref{1.4}, then
\begin{equation*}
\left\vert \lambda \right\vert \geq \frac{\left( \alpha +\beta -1\right)
\Gamma \left( \alpha \right) \Gamma \left( \beta \right) }{(b-a)^{\alpha+\beta}}.  
\end{equation*}
\end{corollary}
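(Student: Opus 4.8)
The plan is to reduce the eigenvalue problem \eqref{2.7}--\eqref{1.4} to the boundary value problem \eqref{1.3}--\eqref{1.4} and then invoke Theorem~\ref{thm:mr}. First I would rewrite \eqref{2.7} in the form $-{}^{C}D_{b^{-}}^{\alpha}D_{a^{+}}^{\beta}u(t)+q(t)u(t)=0$ by setting $q(t)\equiv-\lambda$, so that any eigenfunction $u$ associated with $\lambda$ is precisely a nontrivial continuous solution of \eqref{1.3}--\eqref{1.4} with this constant potential. Since $\lambda$ is an eigenvalue by hypothesis, such a nontrivial $u$ exists, and $q(t)=-\lambda$ is trivially continuous on $[a,b]$, so all the hypotheses of Theorem~\ref{thm:mr} are met.

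Next I would apply the Lyapunov-type inequality \eqref{2.5} to this choice of $q$. The left-hand side becomes
\begin{equation*}
\int_{a}^{b}\left\vert q(r)\right\vert dr=\int_{a}^{b}\left\vert \lambda\right\vert dr=\left\vert \lambda\right\vert (b-a),
\end{equation*}
so \eqref{2.5} yields
\begin{equation*}
\left\vert \lambda\right\vert (b-a)\geq \frac{\left(\alpha+\beta-1\right)\Gamma\left(\alpha\right)\Gamma\left(\beta\right)}{(b-a)^{\alpha+\beta-1}}.
\end{equation*}
Dividing both sides by $(b-a)>0$ gives exactly
\begin{equation*}
\left\vert \lambda\right\vert \geq \frac{\left(\alpha+\beta-1\right)\Gamma\left(\alpha\right)\Gamma\left(\beta\right)}{(b-a)^{\alpha+\beta}},
\end{equation*}
which is the claimed bound.

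There is essentially no obstacle here: the corollary is a direct specialization, and the only things to check are the (routine) verifications that the constant potential satisfies the continuity requirement and that $b-a>0$ so the division is legitimate. If one wanted to be thorough, one could also remark that the sharpness of \eqref{2.5} does not immediately transfer, since the extremal potential realizing equality in Theorem~\ref{thm:mr} need not be constant; but the statement of the corollary only asserts the inequality, so no further work is needed.
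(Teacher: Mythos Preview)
Your argument is correct and is exactly the intended one: the paper states this result as an immediate corollary of Theorem~\ref{thm:mr} without giving a separate proof, and the only thing to do is take $q$ constant equal to $\lambda$, evaluate $\int_a^b|q|=|\lambda|(b-a)$, and divide. One harmless slip: comparing \eqref{2.7} with \eqref{1.3} gives $q(t)\equiv\lambda$, not $-\lambda$; since only $|q|$ enters \eqref{2.5}, this does not affect the conclusion.
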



\section{Conclusion}
\label{sec5}

We derived a new Lyapunov-type inequality for a sequential boundary 
value problem subject to natural boundary conditions. The idea of studying 
a differential equation depending on the sequence of right and left 
fractional derivatives, is relevant in applications and seems to be new.
Contrary to existing papers on Lyapunov inequalities and its generalizations, 
here the expression of the Green function $G$ is not classical and 
is expressed by integrals, which is nontrivial. 


\section*{Acknowledgments}

Guezane-Lakoud and Khaldi were supported by Algerian funds
within CNEPRU projects B01120120002 and B01120140061, 
respectively. Torres was supported by Portuguese funds 
through CIDMA and FCT, project UID/MAT/04106/2013.
The authors are grateful to an anonymous referee for 
valuable comments and suggestions, which helped to improve 
the quality of the paper.



\end{document}